\newtheorem{theorem}{Theorem} [section]
\newtheorem{lemma}[theorem]{Lemma}
\newtheorem{proposition}[theorem]{Proposition}
\newtheorem*{acknowledgment}{Acknowledgment}
\DeclareMathOperator*{\supp}{supp}
\newcommand{\I}{\hspace{0.5mm}\text{I}\hspace{0.5mm}}
\newcommand{\noi}{\noindent}
\newcommand{\Z}{\mathbb{Z}}
\newcommand{\R}{\mathbb{R}}
\newcommand{\T}{\mathbb{T}}
\let\Im=\undefined\DeclareMathOperator*{\Im}{Im}
\newcommand{\al}{\alpha}
\newcommand{\be}{\beta}
\newcommand{\dl}{\delta}
\newcommand{\eps}{\varepsilon}
\newcommand{\g}{\gamma}
\newcommand{\G}{\Gamma}
\newcommand{\cj}{\overline}
\newcommand{\dx}{\partial_x}
\newcommand{\dt}{\partial_t}
\renewcommand{\G}{\mathcal{G}}
\renewcommand{\I}{\mathcal{I}}
\newcommand{\E}{\mathcal{E}}
\newcommand{\too}{\longrightarrow}
\numberwithin{equation}{section}
\numberwithin{theorem}{section}
\begin{document}
\selectlanguage{english}



\title[GWP of DNLS on the circle]
{A remark on global well-posedness
of the derivative nonlinear Schr\"odinger equation on the circle
}

\author{Razvan Mosincat and Tadahiro Oh}

\address{
School of Mathematics\\
The University of Edinburgh, 
and The Maxwell Institute for the Mathematical Sciences\\
James Clerk Maxwell Building\\
The King's Buildings\\
Peter Guthrie Tait Road
Edinburgh\\
EH9 3FD, United Kingdom}
\email{r.o.mosincat@sms.ed.ac.uk}


\email{hiro.oh@ed.ac.uk}

\subjclass[2010]{35Q55}

\keywords{derivative nonlinear Schr\"odinger equation; global well-posedness;
Gagliardo-Nirenberg inequality}


\begin{abstract}

In this note, we consider the derivative nonlinear Schr\"odinger equation
on the circle.
In particular, by 
adapting Wu's recent argument to the periodic setting,
we prove its global well-posedness in $H^1(\T)$, provided that 
the mass is less than $4\pi$.
Moreover, this mass threshold is independent of spatial periods.
\end{abstract}

\maketitle

%
%
%

%

\section{Introduction}

In this note, we consider global well-posedness of the following
derivative nonlinear Schr\"odinger equation (DNLS) 
on $\T_L := \R/(L\Z)\simeq [0, L)$:
\begin{align}
\begin{cases}
i \dt u + \dx^2 u = i \dx (|u|^2 u)\\
u|_{t = 0} = u_0 \in H^1(\T_L), 
\end{cases}
\qquad (x, t) \in \T_L\times \R.
\label{DNLS1}
\end{align}

\noi
The equation \eqref{DNLS1} is known to be completely integrable
and thus possesses an infinite sequence of 
conservation laws.
For our analysis, the following conservation laws play an important role:
\begin{align}
 & \text{Mass:}   & M(u)   & = \int_{\T_L} |u|^2 dx, \label{C1}\\
 & \text{Hamiltonian:}& H(u) &  = \Im \int_{\T_L} u \cj{u}_x dx+ \frac{1}{2} \int_{\T_L} |u|^4 dx, \label{C2}\\
 & \text{Energy:} & E(u)  & = \int_{\T_L} |u_x|^2 dx
+\frac{3}{2} \Im \int_{\T_L} u u \cj{u u}_x dx
+
\frac{1}{2}\int_{\T_L}|u|^6 dx.
\label{C3}
\end{align}

Let us briefly go over the known well-posedness
results on $\T$, i.e.~with $L = 1$.
Herr \cite{Herr} proved local well-posedness of \eqref{DNLS1} in $H^\frac{1}{2}(\T)$.
He also proved global well-posedness in $H^1(\T)$, 
under the assumption that the mass is less than $\frac{2}{3}$.\footnote{As pointed out in \cite[Remark 6.1]{Herr}, 
 this mass threshold $\frac{2}{3}$ is not sharp.
 In view of the corresponding result \cite{HO} on $\R$, 
 it is likely that the mass threshold can be improved to $2\pi$ within the framework of \cite{Herr}.}
In the low regularity setting, Win \cite{Win} applied the $I$-method \cite{CKSTT1, CKSTT2} and proved
global well-posedness of \eqref{DNLS1}
in $H^{s}(\T)$, $s>\frac{1}{2}$, 
provided that mass 
 is sufficiently small.\footnote{In \cite{Win}, 
the mass threshold was not quantified in a precise manner.
See, for example, \cite[Lemma 3.4]{Win}.}
 Our main interest in this note is 
to 
 improve the mass threshold
 for  global well-posedness of \eqref{DNLS1}
 in the smooth setting, i.e.~in $H^1(\T_L)$.

On $\R$, Hayashi-Ozawa
\cite{HO} proved global well-posedness of \eqref{DNLS1} in $H^1(\R)$, 
provided that mass is less than $2\pi$.
By the sharp Gagliardo-Nirenberg inequality
due to Weinstein \cite{W}:
\[ \|f\|_{L^6(\R)} \leq \frac{4}{\pi^2} \|\dx f \|_{L^2(\R)}^\frac{1}{3}\|f\|_{L^2(\R)}^\frac{2}{3}, \]

\noi
this smallness of  mass guarantees that 
the energy $E(u)$ remains coercive and 
controls the $\dot H^1(\R)$-norm of a solution.
Thus, this situation is analogous to 
that for the focusing quintic nonlinear Schr\"odinger equation (NLS).\footnote{Note that both
DNLS and the focusing quintic NLS   on $\R$ are mass-critical.}
On the one hand,  there is a dichotomy between global well-posedness
and finite time blowup solutions for the focusing quintic NLS on $\R$, 
where the mass threshold is given by the mass of the ground state.
On the other hand, DNLS has a much richer structure
such as complete integrability
and the question of global well-posedness/finite time blowup solutions for large masses 
has been open
for decades.
Recently, Wu \cite{Wu1, Wu2} made a progress in this direction.
In particular, he proved global well-posedness
of \eqref{DNLS1} on $\R$
for masses less than $4 \pi$.
Our main result states that global well-posedness of \eqref{DNLS1}
in the periodic setting also  holds with the same mass threshold $4\pi$.

\begin{theorem}\label{THM:1}
Let $L > 0$. Then, the derivative nonlinear Schr\"odinger equation \eqref{DNLS1}
on $\T_L$ is globally well-posed in $H^1(\T_L)$, 
provided that the mass is less than $4\pi$.
\end{theorem}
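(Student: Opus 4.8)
The plan is to promote the local theory to a global one by deriving an a priori bound on $\|u(t)\|_{H^1(\T_L)}$ that is uniform on the maximal interval of existence. Local well-posedness in $H^1(\T_L)$ follows from Herr's argument, carried out on $H^{1/2}(\T)$ for unit period, together with persistence of regularity and a rescaling in $L$; it furnishes a blow-up alternative, so that the solution extends globally unless $\|u(t)\|_{H^1}$ diverges in finite time. Since the mass $M(u)$ in \eqref{C1} is conserved, it pins down $\|u(t)\|_{L^2(\T_L)}$ for all time, and the whole problem reduces to controlling $\|\dx u(t)\|_{L^2(\T_L)}$. This control must come from the conserved energy $E(u)$ in \eqref{C3}; the difficulty is that $E$ is not manifestly coercive, since the cubic-derivative term $\tfrac32\Im\int_{\T_L} uu\cj{uu}_x\,dx$ is indefinite and a priori comparable to $\|\dx u\|_{L^2}^2$. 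The goal is thus to show that, for $M(u) < 4\pi$, the conserved quantities together dominate $\|\dx u\|_{L^2}^2$.

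First I would perform a gauge transformation adapted to the circle. On $\R$ one removes the worst nonlinear interaction via $v = \exp\!\big(-i\int_{-\infty}^x|u|^2\,dy\big)u$, but on $\T_L$ the primitive of $|u|^2$ is not periodic: its increment over one period equals $M(u)$. I would therefore use the periodic gauge
\[
\G(u)(x) = \exp\!\Big(-i\int_0^x|u(y)|^2\,dy + i\tfrac{M(u)}{L}\,x\Big)\,u(x),
\]
in which the linear drift $i\tfrac{M(u)}{L}x$ exactly restores periodicity. Writing $v = \G(u)$, one rewrites $E(u)$ as a functional of $v$ in which the indefinite cubic-derivative term is traded for the sextic term $\|v\|_{L^6(\T_L)}^6$ together with lower-order contributions; the cost of working on the circle is a finite collection of correction terms produced by the drift, which I expect to absorb using the conserved mass $M(u)$ and the conserved momentum $\Im\int_{\T_L}\cj u\,\dx u\,dx$ that sits inside the Hamiltonian $H(u)$ in \eqref{C2}.

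Next comes the coercivity estimate, where Wu's refinement is essential. After gauging, the energy takes the schematic form $\|\dx v\|_{L^2}^2$ minus an indefinite sextic contribution plus lower-order terms. Estimating the sextic term crudely by the sharp Gagliardo-Nirenberg inequality yields coercivity only for $M<2\pi$, the Hayashi-Ozawa threshold. Wu's observation is that the conserved momentum should not be discarded: combining it with the sextic term by completing a square lowers the effective coefficient of $\|\dx v\|_{L^2}^2$, and coercivity then persists up to $M<4\pi$. Concretely one reduces to an inequality whose prefactor is schematically proportional to $1-\big(M(u)/4\pi\big)^2$, positive precisely when $M(u)<4\pi$. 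The analytic input is the sharp $L^6$ Gagliardo-Nirenberg inequality transferred to the circle, which I would put in the form
\[
\|v\|_{L^6(\T_L)}^6 \le C_\ast\,\|\dx v\|_{L^2(\T_L)}^2\,M(u)^2 + \frac{C_L}{L^2}\,M(u)^3,
\]
where $C_\ast$ is Weinstein's scale-invariant constant on $\R$ and the remainder involves only the conserved mass. Because $C_\ast$ carries no $L$-dependence, the threshold it produces is the same for every period; the remainder depends on $L$ but, being a fixed multiple of a conserved quantity, only inflates the size of the final bound and leaves the value $4\pi$ untouched.

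The main obstacle will be the periodic gauge transformation and the drift-generated correction terms, which have no analogue on $\R$ and must be shown harmless to coercivity by bounding them through the conserved mass and momentum. Closely tied to this is the transfer of the sharp Gagliardo-Nirenberg inequality to $\T_L$ with no loss in the leading constant $C_\ast$, confining every $L$-dependence to lower-order, mass-controlled remainders; securing this is exactly what makes the threshold independent of the period. Once the coercivity bound $\|\dx u(t)\|_{L^2}^2 \le C\big(M(u_0),E(u_0),H(u_0),L\big)$ is established, conservation of $M$, $E$, and $H$ propagates it to all $t$ in the maximal interval, and the blow-up alternative then yields global well-posedness in $H^1(\T_L)$ whenever $M(u_0)<4\pi$.
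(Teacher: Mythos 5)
Your outline reproduces the paper's skeleton (Herr's local theory with a blow-up alternative, a periodic gauge, a sharp Gagliardo--Nirenberg inequality, Wu's use of the momentum), but the step that is supposed to produce the threshold $4\pi$ --- the ``completing the square'' coercivity claim --- has a genuine gap. Completing the square means minimizing $\|\dx (e^{i\al x}v)\|_{L^2}^2 = \|\dx v\|_{L^2}^2 - 2\al \Im\int v \cj{v}_x\, dx + \al^2 M$ over $\al$ and inserting the result into your transferred Weinstein-type inequality. Even granting that inequality on $\T_L$ with no loss, and granting continuous modulation, this yields at best
\begin{align*}
\E(v) \geq \Big(1 - \frac{M^2}{4\pi^2}\Big)\|\dx v\|_{L^2}^2
+ \frac{M}{4\pi^2}\Big(P(v) + \tfrac14\|v\|_{L^4}^4\Big)^2 + \frac38 \mu\|v\|_{L^4}^4 ,
\end{align*}
whose kinetic prefactor is $1 - (M/2\pi)^2$, not $1-(M/4\pi)^2$: this closes only for $M < 2\pi$, i.e.\ the Hayashi--Ozawa/Herr threshold, and for $2\pi < M < 4\pi$ the negative kinetic term must be beaten by the momentum term. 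Since $\frac1{16}\|v\|_{L^6}^6 \approx \|\dx v\|_{L^2}^2$ along a blow-up sequence, this requires a \emph{lower} bound of the form $\|v\|_{L^4}^8 \gtrsim \|v\|_{L^6}^6$, whereas H\"older gives only the upper bound $\|v\|_{L^4}^8 \le M \|v\|_{L^6}^6$. Supplying that lower bound is exactly the content of the paper's Lemma \ref{LEM:bound}: the quantity $f_n = \|v(t_n)\|_{L^4}^4/\|v(t_n)\|_{L^6}^3$ admits a positive lower bound, and this comes from the $L^4$-based sharp inequality \eqref{GN0} with Agueh's constant (Lemma \ref{LEM:GN}), not from Weinstein's $L^2$-based inequality that you invoke. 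With that lemma in hand, the conserved quantities force the polynomial inequality \eqref{M3} in $f_n$, which is unsolvable precisely when $M < 4\pi$; the argument is thus intrinsically a contradiction argument along a sequence $t_n$, not a pointwise-in-time coercivity estimate of the energy.

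Three further points are glossed over. First, on $\T_L$ the modulation $e^{i\al x}v$ is $L$-periodic only for $\al \in \frac{2\pi}{L}\Z$, so the continuous minimization underlying ``completing the square'' is simply unavailable; the paper must take $\al_n$ to be an integer multiple of $\frac{2\pi}{L}$ near the optimal value, and the discretization error is $o(1)$ only because the optimal frequency grows like $\|v(t_n)\|_{L^6}^3 \to \infty$ --- another reason the proof is asymptotic, and it also forces the separate treatment of Case 1. Second, the gauge you wrote down is the $\be = 1$ gauge; by \eqref{C5} this kills the sextic term but \emph{keeps} the indefinite quartic-derivative term $-\frac12 \Im \int vv\cj{vv}_x\,dx$, the opposite of what you claim. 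The paper takes $\be = \frac34$, for which the quartic-derivative term drops out and one obtains the conserved quantity $\E(v)$ in \eqref{C6}. Third, your Gagliardo--Nirenberg transfer ``with no loss in the leading constant'' is asserted, not proved; the paper's Lemma \ref{LEM:GN} does lose a factor $\big(1 + \frac{2\dl}{5L}\big)^{2/9}$, and the clean threshold $4\pi$ is recovered only by proving Proposition \ref{PROP:1} for every $\dl > 0$ and letting $\dl \to 0$.
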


\noi
Theorem \ref{THM:1} improves the known mass threshold in \cite{Herr}
for global well-posedness in $H^1(\T)$.
Moreover, note that the mass threshold $4\pi$ is independent of the period $L$.



The question of global well-posedness/finite time blowup solutions
for larger masses ($\geq 4\pi$) remains open 
 on  both $\R$ and $\T_L$.
It is worthwhile to note that 
\eqref{DNLS1} possesses
finite time blowup solutions
under the Dirichlet boundary condition
on intervals and the half line $\R_+ = [0, \infty)$, 
if $E(u) < 0$ (under some extra conditions).
See \cite{Tan, Wu1}.

The proof of  Theorem \ref{THM:1}
is based on Wu's argument \cite{Wu2}. 
On the one hand, the following 
sharp Gagliardo-Nirenberg inequality:
\begin{align}
\|f\|_{L^6(\R)}\leq C_\text{GN} \|\dx f\|_{L^2(\R)}^\frac{1}{9} \|f\|_{L^4(\R)}^\frac{8}{9}
\label{GN0}
\end{align}
	
\noi
plays an important role in \cite{Wu2}.
Here, the optimal constant $C_\text{GN}$ is given by 
 $C_\text{GN} = 3^\frac{1}{6} (2\pi)^{-\frac 19}$.
See Agueh \cite{A}.
On the other hand, 
 \eqref{GN0} does not hold  on $\T_L$
and thus we need to consider a  variation of \eqref{GN0}
suitable for our application on $\T_L$.
Moreover, the gauge transform 
in the periodic setting introduces
extra terms in the conservation laws
that we need to control.

\section{Proof of Theorem \ref{THM:1}}

In this section, we present the proof of Theorem \ref{THM:1}.
Note that Theorem \ref{THM:1} follows
once we prove the following proposition for all sufficiently small  $\dl > 0$.

\begin{proposition}\label{PROP:1}
Let $L, \dl > 0$. Then,  \eqref{DNLS1}
on $\T_L$ is globally well-posed in $H^1(\T_L)$
provided that the mass is less than $4\pi\big(1 + \frac{2\dl}{5L}\big)^{-2}$.
\end{proposition}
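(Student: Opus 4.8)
The plan is to reduce Proposition \ref{PROP:1} to an a priori bound on $\norm{\dx u(t)}_{L^2(\T_L)}$ along the flow and then to produce this bound from the conservation laws \eqref{C1}--\eqref{C3} and a periodic counterpart of the sharp Gagliardo-Nirenberg inequality \eqref{GN0}. First I would invoke local well-posedness of \eqref{DNLS1} in $H^1(\T_L)$, adapting Herr's argument to a general period $L$; this yields a solution on a maximal interval that extends globally as soon as $\sup_t \norm{u(t)}_{H^1(\T_L)}<\infty$ there. Since $M(u)$ is conserved, it suffices to control $\norm{\dx u(t)}_{L^2}$. Because the indefinite cross term $\frac32\Im\int_{\T_L} uu\cj{uu}_x\,dx$ in $E(u)$ resists a direct estimate, I would pass to the periodic gauge transform
\[
\G(u)(x)=u(x)\exp\Big(-i\int_0^x|u(y)|^2\,dy+\tfrac{ix}{L}M(u)\Big),
\]
whose phase is chosen to be $L$-periodic. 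Writing $v=\G(u)$, one has $|v|=|u|$, so $M(u)$ and $\int_{\T_L}|v|^6\,dx=\int_{\T_L}|u|^6\,dx$ are unchanged, while $E(u)$ and $H(u)$ may be recast in terms of $v$. Compared with the line computation of Hayashi-Ozawa and Wu, the non-constant phase produces extra terms proportional to $M(u)$ and to $L^{-1}$, which I would record and dispatch as lower order. The outcome is an identity of the schematic form $\norm{\dx v}_{L^2}^2=E(u)+c\int_{\T_L}|v|^6\,dx+\mathcal R$ with $c>0$ and a remainder $\mathcal R$ controlled by $M(u)$, $H(u)$, and the $L^{-1}$-corrections.

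Next I would establish the periodic analogue of \eqref{GN0}. On $\R$ the inequality rests on $\norm{f}_{L^\infty}^4\le 4\norm{f}_{L^6}^3\norm{\dx f}_{L^2}$ (from $|f(x)|^4=\int_{-\infty}^x\dd_s|f(s)|^4\,ds$ and H\"older) combined with $\norm{f}_{L^6}^6\le\norm{f}_{L^\infty}^2\norm{f}_{L^4}^4$. On $\T_L$ the first step must be amended: choosing $x_0$ where $|f|$ is minimal and writing $|f(x)|^4=|f(x_0)|^4+\int_{x_0}^x\dd_s|f(s)|^4\,ds$, one bounds $|f(x_0)|^4\le\frac1L\norm{f}_{L^4}^4$ by the mean value inequality, which yields
\[
\norm{f}_{L^\infty(\T_L)}^4\le \tfrac1L\norm{f}_{L^4(\T_L)}^4+4\norm{f}_{L^6(\T_L)}^3\norm{\dx f}_{L^2(\T_L)}.
\]
Feeding this into $\norm{f}_{L^6}^6\le\norm{f}_{L^\infty}^2\norm{f}_{L^4}^4$, and using the sharp constant $C_\text{GN}=3^{1/6}(2\pi)^{-1/9}$ of \eqref{GN0} in place of the elementary one to keep the threshold sharp, I obtain a periodic Gagliardo-Nirenberg inequality that matches \eqref{GN0} up to an $O(L^{-1})$ defect carried by $\norm{f}_{L^4}$. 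The parameter $\dl$ will enter through the Young-type splitting used to absorb this defect.

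Finally I would close the estimate. The gauge-transformed Hamiltonian gives a bound $\norm{v}_{L^4(\T_L)}^4\le 2H(u)+2M(u)^{1/2}\norm{\dx v}_{L^2}+(\text{corrections})$, in which the indefinite part $\Im\int_{\T_L} v\cj{v}_x\,dx$ is estimated by $M(u)^{1/2}\norm{\dx v}_{L^2}$; crucially, this ties $\norm{v}_{L^4}$ to the gradient. Inserting the periodic Gagliardo-Nirenberg inequality into the energy identity bounds $\int_{\T_L}|v|^6\,dx$ by $\norm{\dx v}_{L^2}^{2/3}\norm{v}_{L^4}^{16/3}$ plus an $L^{-1}$ defect, and since $\norm{v}_{L^4}^{16/3}=(\norm{v}_{L^4}^4)^{4/3}$ contributes a factor $\norm{\dx v}_{L^2}^{4/3}$ through the Hamiltonian bound, the dominant term on the right is proportional to $C_\text{GN}^6\,M(u)^{2/3}\norm{\dx v}_{L^2}^2$. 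Absorbing it into $\norm{\dx v}_{L^2}^2$ on the left requires its coefficient to be strictly less than one, and evaluating that coefficient with the sharp value of $C_\text{GN}$ is precisely the condition $M(u)<4\pi\big(1+\frac{2\dl}{5L}\big)^{-2}$, the factor $\big(1+\frac{2\dl}{5L}\big)^{-2}$ being generated by the Young splitting, with parameter $\dl$, that removes the $L^{-1}$ defect. The resulting a priori bound on $\norm{\dx v}_{L^2}$ transfers to $\norm{\dx u}_{L^2}$ since, once the lower-order norms are under control, $\G$ and its inverse are bounded on $H^1(\T_L)$. The main obstacle is the constant tracking in this last step: to reach the threshold $4\pi$ rather than a lossy multiple of it, one must carry $C_\text{GN}$ unharmed through both the periodic modification of \eqref{GN0} and the gauge transform, and check that every $M(u)$- and $L^{-1}$-dependent term produced on $\T_L$ is either strictly lower order or enters only through the explicit correction $\big(1+\frac{2\dl}{5L}\big)^{-2}$.
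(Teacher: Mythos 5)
Your outline reproduces the paper's skeleton (local well\mbox{-}posedness plus a blow-up alternative, a gauge transform to kill the cross term, a $\delta$-dependent periodic version of \eqref{GN0}), but the closing step—where the theorem actually lives—would fail: a direct absorption argument after Cauchy--Schwarz cannot reach the threshold $4\pi$. Two issues compound. First, the gauge that eliminates $\frac32\Im\int uu\cj{uu}_x$ from the energy is $\G_\be$ with $\be=\frac34$ (your explicit phase corresponds to $\be=1$, which leaves behind $-\frac12\Im\int vv\cj{vv}_x$); with $\be=\frac34$ the conserved Hamiltonian \eqref{C4} is $H(v)=\Im\int v\cj{v}_x-\frac14\int|v|^4+\frac34 L\mu^2$, so Cauchy--Schwarz gives $\|v\|_{L^4}^4\le 4M^{1/2}\|\dx v\|_{L^2}+O(1)$, with a factor $4$, not your $2$. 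Second, and decisively, inserting this into $\|v\|_{L^6}^6\le C_\text{GN}^6\|\dx v\|_{L^2}^{2/3}\|v\|_{L^4}^{16/3}$ and the energy $\|\dx v\|_{L^2}^2\le \mathcal{E}(v)+\frac1{16}\|v\|_{L^6}^6$ yields the absorption coefficient $\frac{4^{4/3}}{16}C_\text{GN}^6M^{2/3}$, and the requirement that it be below $1$ is $M<4C_\text{GN}^{-9}=\frac{8\pi}{3\sqrt3}\approx 4.8$, far short of $4\pi\approx 12.6$ (with your factor $2$ one gets $16C_\text{GN}^{-9}$, which is not $4\pi$ either; no bookkeeping makes this step ``evaluate to'' $4\pi$). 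In fact your argument is equivalent to using only Lemma \ref{LEM:bound} of the paper—the upper bound $f_n\le M^{1/2}$ against the lower bound $2C_\text{GN}^{-9/2}\big(1+\frac{2\dl}{5L}\big)^{-1}$—which contradicts itself only when $M<4C_\text{GN}^{-9}\big(1+\frac{2\dl}{5L}\big)^{-2}$. The missing idea, which is the whole point of Wu's refinement and of the paper's Cases 1 and 2, is to avoid the lossy estimate $\Im\int v\cj{v}_x\le M^{1/2}\|\dx v\|_{L^2}$ altogether: one argues by contradiction along times $t_n$, uses the conserved momentum $P(v)=H(v)-\frac{3}{4L}M(v)^2$ together with the identity \eqref{M1} for modulations $\phi_n=e^{i\al_n x}v$, and optimizes the frequency shift $\al_n\in\frac{2\pi}{L}\Z$. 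The gain is self-improving: the coefficient $\eta_n=\frac1{16}-\big(1+\frac{2\dl}{5L}\big)^{-4}C_\text{GN}^{-18}f_n^{-4}$ in \eqref{M2a} is small exactly because $f_n$ is pinned near its lower bound, and it is this feedback that makes \eqref{M3} contradictory precisely for $M<4\pi\big(1+\frac{2\dl}{5L}\big)^{-2}$.

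There is a second, independent gap in your periodic Gagliardo--Nirenberg step. The minimal-point argument you describe proves the inequality with the constant that proof produces, namely $2^{2/9}$ (or $2^{1/9}$ with the two-sided variant), which is strictly larger than $C_\text{GN}=3^{1/6}(2\pi)^{-1/9}$; you cannot ``use the sharp constant in place of the elementary one,'' since a constant is inseparable from the proof that yields it, and with $2^{2/9}$ every threshold above shrinks by the factor $\big(C_\text{GN}/2^{2/9}\big)^9=\frac{3\sqrt3}{8\pi}\approx 0.21$. The paper's Lemma \ref{LEM:GN} retains the sharp constant by a different mechanism, following Lebowitz--Rose--Speer: translate so that $|f(0)|\le L^{-1/4}\|f\|_{L^4}$, extend $f$ to $\R$ by linear cutoff to zero over intervals of length $\dl$, and apply the sharp line inequality \eqref{GN0} to the extension. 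This also explains the exact form of the threshold correction: the factor $\big(1+\frac{2\dl}{5L}\big)^{2/9}$ in \eqref{GN1} is the growth of the $L^4$ norm under the extension (see \eqref{GN4}), not the outcome of a Young-type splitting, and the additive defect $\frac{2}{\dl L^{1/2}}\|f\|_{L^4}^2$ is then shown to be lower order along the blow-up sequence via \eqref{bound2}.
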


\noi
The remaining part of this note is devoted to the proof of Proposition \ref{PROP:1}. 
%
%

We first establish 
the following version of the Gagliardo-Nirenberg inequality on $\T_L$
which incorporates the sharp constant from \eqref{GN0}. 
The proof is a simple adaptation of  the argument in Lebowitz-Rose-Speer \cite{LRS}.

\begin{lemma}\label{LEM:GN}
Let $\dl > 0$.
Then, we have 
\begin{align}
\| f\|_{L^6(\T_L)}
\leq C_\textup{GN}\bigg(1 + \frac{2\dl}{5L}\bigg)^\frac{2}{9}
\Big(\|\dx f\|^2_{L^2(\T_L)} + \frac 2{\dl L^\frac{1}{2}} \|f\|^2_{L^4(\T_L)}\Big)^\frac{1}{18}
\|f\|^\frac{8}{9}_{L^4(\T_L)}.
\label{GN1}
\end{align}
\end{lemma}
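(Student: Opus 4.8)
The plan is to deduce \eqref{GN1} from the sharp inequality \eqref{GN0} on $\R$ by the Lebowitz--Rose--Speer device: cut out one period of $f$, extend it to a compactly supported function on $\R$, apply \eqref{GN0}, and pay for the extension with the periodic norms. The inequality is trivial for $f\equiv 0$, so assume $f\not\equiv 0$. Since $\|f\|_{L^6(\T_L)}$ and $\|f\|_{L^4(\T_L)}$ depend only on $|f|$ while the diamagnetic inequality gives $\|\dx|f|\|_{L^2(\T_L)}\le\|\dx f\|_{L^2(\T_L)}$, replacing $f$ by $|f|$ only decreases the right-hand side of \eqref{GN1}; hence it suffices to treat a real, nonnegative $f$. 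As $f\in H^1(\T_L)$ is continuous, $f^4$ attains its mean, so the intermediate value theorem yields a base point $x_0\in\T_L$ with
\[ f(x_0)^4 = \tfrac1L\|f\|_{L^4(\T_L)}^4, \qquad\text{equivalently}\qquad f(x_0) = L^{-1/4}\|f\|_{L^4(\T_L)}. \]

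Next I would build $g\colon\R\to\R$ by gluing one full period of $f$ to two linear ramps: set $g=f$ on $[x_0,x_0+L]$ (using $L$-periodicity, so $g(x_0)=g(x_0+L)=f(x_0)$), let $g$ decrease linearly from $f(x_0)$ to $0$ across each buffer $[x_0-\dl,x_0]$ and $[x_0+L,x_0+L+\dl]$, and set $g=0$ elsewhere. Then $g$ is continuous, compactly supported, and piecewise $C^1$, hence $g\in H^1(\R)$, so \eqref{GN0} applies. Because $g$ reproduces a whole period of $f$, one has $\|g\|_{L^6(\R)}\ge\|f\|_{L^6(\T_L)}$, and it remains to bound the two right-hand factors of \eqref{GN0}.

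On each ramp $|\dx g|=f(x_0)/\dl$, so the two buffers add $2f(x_0)^2/\dl = \frac{2}{\dl L^{1/2}}\|f\|_{L^4(\T_L)}^2$ to $\|\dx f\|_{L^2(\T_L)}^2$, whence
\[ \|\dx g\|_{L^2(\R)}^2 = \|\dx f\|_{L^2(\T_L)}^2 + \tfrac{2}{\dl L^{1/2}}\|f\|_{L^4(\T_L)}^2. \]
Likewise $\int_0^\dl f(x_0)^4(1-t/\dl)^4\,dt = \tfrac\dl5 f(x_0)^4$ on each ramp, so the buffers add $\frac{2\dl}{5L}\|f\|_{L^4(\T_L)}^4$ and
\[ \|g\|_{L^4(\R)}^4 = \|f\|_{L^4(\T_L)}^4\big(1+\tfrac{2\dl}{5L}\big). \]

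Finally I would substitute these three estimates into \eqref{GN0}, using $\|\dx g\|_{L^2(\R)}^{1/9}=(\|\dx g\|_{L^2(\R)}^2)^{1/18}$ and $\|g\|_{L^4(\R)}^{8/9}=(\|g\|_{L^4(\R)}^4)^{2/9}$; the factor $(1+\tfrac{2\dl}{5L})^{2/9}$ then comes out cleanly from the $L^4$-term and \eqref{GN1} follows verbatim. I do not expect a genuine analytic obstacle: the whole content lies in designing the extension so that the buffer constants match the target. The two decisive choices are cutting at the $L^4$-mean level $x_0$ (which converts $f(x_0)$ into $\|f\|_{L^4(\T_L)}$ with exactly the right powers of $L$) and using \emph{linear} ramps of width precisely $\dl$ (which generate the constants $\tfrac25$ and $2$ in the $L^4$- and derivative-buffers). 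Getting these two normalizations right, together with the opening reduction to real nonnegative $f$, is the only step requiring care.
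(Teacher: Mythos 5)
Your proposal is correct and follows essentially the same route as the paper: cut one period at a base point where $|f|^4$ is at (or below) its mean, extend by linear ramps of width $\dl$ to a compactly supported function on $\R$, and apply the sharp inequality \eqref{GN0}, with the ramps producing exactly the constants $\tfrac{2\dl}{5L}$ and $\tfrac{2}{\dl L^{1/2}}$. The only cosmetic differences are that the paper works with complex $f$ directly (interpolating linearly between $0$ and the complex value $f(0)$, so your diamagnetic reduction is unnecessary) and uses the inequality $|f(0)|\le L^{-1/4}\|f\|_{L^4(\T_L)}$ rather than exact equality via the intermediate value theorem.
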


\noi
for $f \in H^1(\T_L) $.	

\begin{proof}
%
%
%
Let $f \in H^1(\T_L) \subset C(\T_L)$. 
By periodicity,  we assume that 
\begin{align}
|f(0)| = |f(L)| \leq L^{-\frac{1}{4}}
\|f\|_{L^4(\T_L)}
\label{GN2}
\end{align}

\noi
without loss of generality.
Let $F$ be an extension of $f$ on $[0, L]$ to $\R$
such that (i) $\supp F \subset [- \dl, L + \dl]$
and (ii) $F$ linearly interpolates $0$ and $f(0)$
on $[-\dl, 0]$ and $f(L)$ and $0$ on $[L, L+\dl]$.
Then, by a direct calculation, we have
\begin{align}
\| f\|_{L^6(\T_L)}^6 & \leq \|F\|_{L^6(\R)}^6,\label{GN3} \\
\|F\|_{L^4(\R)}^4 
& \leq \|f\|_{L^4(\T_L)}^4 + \frac {2\dl}{5}|f(0)|^4
\leq \bigg(1 + \frac{2\dl}{5L} \bigg) \|f\|_{L^4(\T_L)}^4,
\label{GN4}
\\
\|\dx F\|_{L^2(\R)}^2 
& \leq \|\dx f\|_{L^2(\T_L)}^2 + 2\frac{|f(0)|^2}{\dl}
\leq \|\dx f\|_{L^2(\T_L)}^2+ \frac{2}{\dl L^\frac{1}{2}} \|f\|_{L^4(\T_L)}^2.
\label{GN5}
\end{align}

\noi
Then, the desired estimate \eqref{GN1}
follows from \eqref{GN0}
with
\eqref{GN3}, \eqref{GN4}, and \eqref{GN5}.
%
%
\end{proof}

Next, we briefly go over the gauge transform associated to \eqref{DNLS1}
with a general parameter $\be \in \R$.
The gauge transform for DNLS was first introduced by Hayashi-Ozawa \cite{HO}
in the non-periodic setting.
Herr \cite{Herr} adapted the gauge transform (with $\be = 1$) to the periodic setting,
exhibiting remarkable cancellations of certain resonances.

Given $f \in H^1(\T_L)$, let $\I(f)$ denotes
the mean-zero antiderivative of $|f|^2$.  
Then, we define $\mathcal{G}_\be: H^1(\T_L) \to H^1(\T_L)$
by  $\G_\be (f) := e^{-i\be \I(f)} f.$
With a slight abuse of notations, 
we also use $\G_\be$ to denote a map$: C([-T, T]: H^1(\T_L)) \to C([-T, T]: H^1(\T_L))$
by  
\[\G_\be (u) := e^{-i\be \I(u)} u.\]

\noi
Given    a local-in-time solution  $u \in C([-T, T]: H^1(\T_L))$ to \eqref{DNLS1}, 
the conservation of mass allows us to  define    
\[\mu  =  \mu(u) := \frac{1}{L} M(u)
= \frac{1}{L} \int_{\T_L} |u|^2 dx, \]

\noi 
independent of time.
We then define 
 \begin{equation}
 v(x, t) : = \G^\be (u)(x, t)  = \G_\be(u)(x - 2 \be \mu t, t),
\label{Gauge1}
 \end{equation}

\noi
A straightforward computation shows that $v$ satisfies 
\begin{align}
i \dt v  +  \dx^2 v = 2 (1 - \be) i  |v|^2 v_x +  (1-2\be) i  v^2 \cj{v}_x
+ \be \mu |v|^2 v + \be (\tfrac{1}{2} - \be) |v|^4 v
- \psi(v) v, 
\label{DNLS2} 
\end{align}

\noi
where
\[ 
 \psi(v):  = \frac{\be}{L} \bigg(\int_{\T_L} 2 \Im  (v \cj{v}_x) + 
\Big(\frac{3}{2} - 2\be\Big) |v|^4\bigg) v  
+ \be^2 \mu^2 .\]

\noi
It follows from \eqref{Gauge1} that $M(v)$ is conserved for \eqref{DNLS2}.
Moreover, the conservation laws $H(u)$ and $E(u)$ in \eqref{C2} and \eqref{C3}
for \eqref{DNLS1} yield the following conservation laws
for \eqref{DNLS2}:
\begin{align}
H(v) & = \Im \int_{\T_L} v \cj{v}_x dx+ \bigg(\frac{1}{2} - \be\bigg) \int_{\T_L} |v|^4 dx
+ L \be  \mu^2, \label{C4}\\
E(v) & = \int_{\T_L} |v_x|^2 dx
+\bigg( \frac{3}{2} -2 \be\bigg) \Im \int_{\T_L}  vv \cj{v v}_x dx
+
\bigg( \be^2 - \frac{3}{2}\be + \frac{1}{2}\bigg)\int_{\T_L}|v|^6 dx  \notag \\
& \hphantom{X}
+ 2\be \Im \int_{\T_L} v \cj{v}_x dx+ \be \bigg(\frac{3}{2} - 2\be\bigg) \mu \int_{\T_L} |v|^4 dx
+ L \be^2 \mu^3.
\label{C5}
\end{align}

\noi
See, for example, the computations in \cite{NORS}.
It is worthwhile to note  that $H(v)$ is not a Hamiltonian for \eqref{DNLS2} in general.
%
%
%
%
In establishing well-posedness, 
the gauge transform with $\be = 1$ played an important role
\cite{HO, Herr, Win}.
For our purpose, we set $\be = \frac 34$ in the following
so that the second term in \eqref{C5} is not present, 
and let $\G : = \G^{\frac 34}$.
In particular, 
it follows from 	
 \eqref{C4} and \eqref{C5} 
with the conservation of $\mu = \mu(v):= L^{-1} M(v)$
that 
%
the following quantity 
\begin{align}
\mathcal{E} (v) 
:= \int_{\T_L} |v_x|^2 dx
- \frac{1}{16}\int_{\T_L}|v|^6 dx  
+ \frac{3}{8} \mu \int_{\T_L}  |v|^4 dx.
\label{C6}
\end{align}

\noi
is conserved for \eqref{DNLS2},
where $v = \G(u)$. 



Now, we move onto the proof of Proposition \ref{PROP:1}.
The proof follows closely to that in \cite{Wu2}.
By time reversibility, we restrict our attention to positive times.
For notational simplicity,  
we suppress the domain of integration $\T_L$
with the understanding that all the norms are taken over $\T_L$.
First, 
recall  that  Herr's local well-posedness result \cite{Herr}
yields a simple blowup alternative;
either (i) the solution $u$ to \eqref{DNLS1} exists globally 
or (ii) there exists a finite time $T_* $
such that $\lim_{t \uparrow T_*}\|u(t)\|_{\dot H^1} = \infty$.

Fix $\dl > 0$. We argue by contradiction.
Suppose that there exists a solution $u$ to \eqref{DNLS1}
such that 
$M(u) < 4\pi\big(1 + \frac{2\dl}{5L}\big)^{-2}$
but
$\lim_{t \uparrow T_*}\|u(t)\|_{\dot H^1} = \infty$
for some finite time $T_* > 0$.
Let  $v = \G (u)$ be the corresponding solution to \eqref{DNLS2}.
Since 
the gauge transform $\G$ in \eqref{Gauge1} is 
continuous on $C([-T, T]: H^1)$, 
our assumption  implies that there exists a sequence $\{t_n\}_{n\in \mathbb{N}} \subset \R_+$
such that 
$\lim_{n \to \infty }\|v(t_n)\|_{\dot H^1} = \infty$
while 
$M(v) = M(u) < 4\pi\big(1 + \frac{2\dl}{5L}\big)^{-2}$.
Then, it follows 
from the conservation of $\mathcal{E}(v)$ that 
\begin{align}
	\| v(t_n)\|_{L^6} \to \infty, 
\label{bound0}
\end{align}

\noi
as $n \to \infty$.

As in \cite{Wu2}, 
we define $\{f_n\}_{n\in \mathbb{N}}$ by 
\[ f_n = \frac{\| v(t_n)\|_{L^4}^4}{
\| v(t_n)\|_{L^6}^3}.\]

\noi
Then, we have the following lemma.

\begin{lemma}\label{LEM:bound}
Let $L, \dl > 0$.
Then, we have 
\begin{align}
2 C_\textup{GN}^{-\frac 92} 
\bigg(1 + \frac{2\dl}{5L}\bigg)^{-1}
+ \eps_n 
\leq f_n \leq M(v)^\frac{1}{2}, 
\label{bound1}
\end{align}
\noi
where $\eps_n = \eps_n (L, \dl) \to 0$ as $n \to \infty$.
In particular, 
$\| v(t_n)\|_{L^4} \to \infty$ as $n \to \infty$.
\end{lemma}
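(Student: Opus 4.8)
The plan is to establish the two bounds in \eqref{bound1} separately: the upper bound is a one-line interpolation estimate, while the lower bound comes from combining the conservation of $\mathcal{E}(v)$ with the sharp Gagliardo--Nirenberg inequality \eqref{GN1}. I would prove the upper bound first, because it also yields an auxiliary growth rate that is indispensable for the lower bound.

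For the upper bound, observe that $M(v)^{\frac12} = \|v(t_n)\|_{L^2}$, so $f_n \le M(v)^{\frac12}$ is equivalent to $\|v(t_n)\|_{L^4}^4 \le \|v(t_n)\|_{L^2}\|v(t_n)\|_{L^6}^3$. This is precisely the fourth power of the interpolation inequality $\|v\|_{L^4} \le \|v\|_{L^2}^{\frac14}\|v\|_{L^6}^{\frac34}$ (from $\frac14 = \frac14\cdot\frac12 + \frac34\cdot\frac16$). Writing $a_n := \|v(t_n)\|_{L^4}^4$ and $b_n := \|v(t_n)\|_{L^6}^6$, this says $a_n \le M(v)^{\frac12} b_n^{\frac12}$; since $b_n \to \infty$ by \eqref{bound0}, I record the key consequence $a_n = O(b_n^{\frac12}) = o(b_n)$.

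For the lower bound, I would rewrite the conservation of $\mathcal{E}(v)$ in \eqref{C6} as $\|v_x(t_n)\|_{L^2}^2 = \mathcal{E}(v) + \frac{1}{16} b_n - \frac38 \mu a_n$. Because $\mu \ge 0$ this gives $\|v_x(t_n)\|_{L^2}^2 \le \mathcal{E}(v) + \frac{1}{16} b_n$, and together with $a_n = o(b_n)$ (so that the correction $\frac{2}{\dl L^{1/2}}\|v(t_n)\|_{L^4}^2 = \frac{2}{\dl L^{1/2}} a_n^{1/2}$ and the fixed constant $\mathcal{E}(v)$ are negligible) one gets $\|v_x(t_n)\|_{L^2}^2 + \frac{2}{\dl L^{1/2}}\|v(t_n)\|_{L^4}^2 = \frac{1}{16} b_n (1 + o(1))$. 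Substituting this into the sixth power of \eqref{GN1}, namely $b_n \le C_\textup{GN}^6 (1 + \frac{2\dl}{5L})^{4/3} \big(\|v_x\|_{L^2}^2 + \frac{2}{\dl L^{1/2}}\|v\|_{L^4}^2\big)^{1/3} a_n^{4/3}$, cancelling a factor $b_n^{1/3}$, and raising the result to the power $\frac34$ (using $16^{-1/4} = \frac12$) produces $b_n^{1/2} \le \frac12 C_\textup{GN}^{9/2}(1 + \frac{2\dl}{5L}) a_n (1 + o(1))$. Rearranging gives $f_n = a_n / b_n^{1/2} \ge 2 C_\textup{GN}^{-9/2}(1 + \frac{2\dl}{5L})^{-1}(1 + o(1))$, and absorbing the $o(1)$ into $\eps_n \to 0$ is exactly \eqref{bound1}.

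The final assertion $\|v(t_n)\|_{L^4} \to \infty$ then follows at once: \eqref{bound1} bounds $f_n$ below by a fixed positive constant for large $n$, so $\|v(t_n)\|_{L^4}^4 = f_n \|v(t_n)\|_{L^6}^3 \gtrsim \|v(t_n)\|_{L^6}^3 \to \infty$ by \eqref{bound0}. I expect the only delicate point to be the bookkeeping of the $o(1)$ terms, that is, verifying that $\mathcal{E}(v)$, the cross term $\frac38\mu a_n$, and the correction $\frac{2}{\dl L^{1/2}} a_n^{1/2}$ are all genuinely $o(b_n)$, and tracking the sign of the resulting perturbations so that the rate $\eps_n = \eps_n(L,\dl) \to 0$ comes out correctly. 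This interdependence is precisely why the crude growth estimate $a_n = O(b_n^{1/2})$ coming from the upper bound must be in hand before attacking the lower bound.
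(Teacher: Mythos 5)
Your proposal is correct and follows essentially the same route as the paper: the upper bound via the interpolation $\|v\|_{L^4}\leq \|v\|_{L^2}^{1/4}\|v\|_{L^6}^{3/4}$, and the lower bound by substituting the conservation of $\mathcal{E}(v)$ into \eqref{GN1} and checking that the terms $\mathcal{E}(v)$, $\tfrac38\mu\|v(t_n)\|_{L^4}^4$, and $\tfrac{2}{\dl L^{1/2}}\|v(t_n)\|_{L^4}^2$ are negligible relative to $\|v(t_n)\|_{L^6}^6$. The paper merely packages these errors as the explicit sequence $\g_n$ in \eqref{bound2} (reused later in the main argument) rather than as your multiplicative $1+o(1)$ factors; the mathematics is the same.
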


\begin{proof}
The upper bound in \eqref{bound1}
follows from H\"older's inequality.
Then,  it follows from 
the upper bound in \eqref{bound1}
and 
\eqref{bound0} that 
\begin{align}
\g_n : = 
 \bigg(\frac{2}{\dl L^\frac{1}{2}} - \frac{3}{8}\mu \| v(t_n)\|_{L^4}^2\bigg)
  \frac{\| v(t_n)\|_{L^4}^2}{
\| v(t_n)\|_{L^6}^6} \too 0,
\label{bound2}
\end{align}

\noi 
as $n \to \infty$.
By Lemma \ref{LEM:GN} with \eqref{C6}, we have 
\begin{align}
f_n 
& \geq 
 C_\textup{GN}^{-\frac{9}{2}}\bigg(1 + \frac{2\dl}{5L}\bigg)^{-1}
\Big(\|\dx v(t_n)\|^2_{L^2} + \frac 2{\dl  L^\frac{1}{2}} \|v(t_n)\|^2_{L^4}\Big)^{-\frac{1}{4}}
\|v(t_n)\|^\frac{3}{2}_{L^6}\notag \\
& = 2 C_\textup{GN}^{-\frac 92} 
\bigg(1 + \frac{2\dl}{5L}\bigg)^{-1}
\bigg( 1+
16
\frac{  \mathcal{E}(v)}{\|v(t_n)\|_{L^6}^{6} } + 16\g_n 
\bigg)^{-\frac{1}{4}}.
\label{bound3}
\end{align}

\noi
Then, 
the lower bound in \eqref{bound1} follows from 
\eqref{bound0}, \eqref{bound2}, and \eqref{bound3}
with the conservation of $\mathcal{E}(v)$.
The second claim follows from \eqref{bound0} and \eqref{bound1}.
\end{proof}

In the following, we use the conservation of 
the momentum $P(v)$ defined by
\begin{align*}
P(v) & := H(v) - \frac{3}{4L} M(v)^2 = \Im \int_{\T_L} v \cj{v}_x dx- \frac{1}{4} \int_{\T_L} |v|^4 dx.
\end{align*}

\noi
In order to exploit the momentum, 
we consider
modulated functions $\phi_n (x, t) = e^{i\al_n x} v(x, t)$
for some non-zero $\al_n \in 2\pi \Z/L$ (to be chosen later).
On the one hand, 
we have
\begin{align}
 P(v) +  \frac{1}{4} \int_{\T_L} |v|^4 dx
=  \Im \int_{\T_L} v \cj{v}_x dx
= - \frac{1}{2\al_n}\mathcal{E}(\phi_n)
+ \frac{\al_n}{2} M(v) + \frac{1}{2\al_n}\E(v).
\label{M1}
\end{align}
	
\noi
On the other hand, 
 by Lemma \ref{LEM:GN} with \eqref{C6} and \eqref{bound2}, 
we have
\begin{align}
\E\big((\phi_n(t_n)\big)
\geq - (\eta_n+\g_n) 
\|v(t_n)\|_{L^6}^6
\label{M2}
\end{align}

\noi
where $\eta_n$ is defined by 
\begin{align} \eta_n : = 
\frac{1}{16} -  \bigg(1 + \frac{2\dl}{5L}\bigg)^{-4}
 C_\textup{GN}^{-18} f_n^{-4}.
 \label{M2a}
\end{align}

\medskip

\noi
{\bf Case 1:} $\eta_n +\g_n \leq 0$ 
for infinitely many $n$.

In this case, we simply set $\al_n = \frac{2\pi}{L}$.
Then, 
for those values of $n$ with $\eta_n +\g_n \leq 0$, 
it follows 
from \eqref{M1} and \eqref{M2} with \eqref{bound2} that 
\begin{align*}
 \frac{1}{4} \|v(t_n) \|^4_{L^4}
& \leq    \frac{L}{4\pi}
(\eta_n +\g_n) \|v(t_n)\|_{L^6}^6
- P(v) + \frac{\pi}{L} M(v) + \frac{L}{4\pi}\E(v)\\
& \leq 
- P(v) + \frac{\pi}{L} M(v) + \frac{L}{4\pi}\E(v).
\end{align*}

\noi
Then, from  the conservation of $M$, $P$, and $\mathcal{E}$,
we conclude that 
$\|v(t_n) \|_{L^4} = O(1)$.
This is a contradiction to Lemma \ref{LEM:bound}.

\medskip

\noi
{\bf Case 2:}  $\eta_n + \g_n> 0$ 
for all sufficiently large $n$.

In this case, 
we choose 
\[\al_n 
:=  \frac{2\pi}{L}\Big[ \frac{L}{2\pi}  \big(M(v)^{-1}(\eta_n + \g_n)\big)^\frac{1}{2}
\| v(t_n)\|_{L^6}^3
\Big]
+ \frac{2\pi}{L} \in \frac{2\pi\Z}{L},\]

\noi
where 
$\g_n$ and $\eta_n$ are as in \eqref{bound2} and \eqref{M2a}.
Here,   $[x]$ denotes the integer part of $x$.
Then, 
from \eqref{M1} and \eqref{M2}, we have 
\begin{align*}
 \frac{1}{4}  \|v(t_n)\|_{L^4}^4
& \leq  
 \big(M(v)(\eta_n + \g_n)\big)^\frac{1}{2}
\|v(t_n)\|_{L^6}^3
- P(v) + \frac{\pi}{L} M(v) + \frac{1}{2\al_n}\E(v).
\end{align*}

\noi
Then, by
Lemma \ref{LEM:bound}, 
\eqref{bound0}, 
\eqref{bound2}, 
and \eqref{M2a}
along with the conservation of $M$, $P$, and $\mathcal{E}$, we obtain
\begin{align}
f_n^6
& \leq  
M(v)  f_n^4 
- 16 \bigg(1 + \frac{2\dl}{5L}\bigg)^{-4}
 C_\textup{GN}^{-18} M(v) + o(1)
\label{M3}
\end{align}

\noi
as $n\to \infty$.
Arguing as in \cite{Wu2}, 
we see that \eqref{M3} is impossible if
\[ M(u) = M(v) < 4\pi\bigg(1 + \frac{2\dl}{5L}\bigg)^{-2}.\]

\noi
This completes the proof of Proposition \ref{PROP:1}
and hence the proof of Theorem \ref{THM:1}.

\begin{acknowledgment} \rm
The authors would like to thank
Sebastian Herr and Yifei Wu
for  helpful comments. 
\end{acknowledgment}

\end{document}